\newcommand{\QQ}{\mathbb{Q}}
\newcommand{\RR}{\mathbb{R}}
\newcommand{\ZZ}{\mathbb{Z}}
\DeclareMathOperator{\Aut}{Aut}
\DeclareMathOperator{\rank}{rank}
\DeclareMathOperator{\Cl}{Cl}
\newcommand{\Jac}{J}
\newcommand{\hhat}{\widehat{h}}
\newcommand{\<}{\langle}
\renewcommand{\>}{\rangle}
\theoremstyle{plain}
\newtheorem{theorem}{Theorem}[section]
\newtheorem{lemma}[theorem]{Lemma}
\newtheorem{proposition}[theorem]{Proposition}
\newtheorem{corollary}[theorem]{Corollary}
\theoremstyle{definition}
\newtheorem{definition}[theorem]{Definition}
\theoremstyle{remark}
\newtheorem{remark}[theorem]{Remark}
\title{Effective height bounds from Mordell--Weil lattice symmetries}
\author{Madhavi Prakash}
\date{}
\begin{document}
\maketitle

\begin{abstract}
We obtain explicit, computable upper bounds for the Néron–Tate height of rational points on curves of genus at least two over number fields, by exploiting the action of automorphisms on the Euclidean lattice structure of the free part of $\Jac(K)$ in the real Mordell–Weil space $V:=\Jac(K)\otimes_{\ZZ}\RR$.  An averaged spectral-gap criterion replaces the 'enough automorphisms' assumption of prior work, and a kernel–injectivity criterion gives a tighter bound whenever some automorphism acts trivially on $V$. When the Jacobian has Mordell–Weil rank 2, a Bravais-lattice test using the height Gram matrix detects this situation. Our approach is demonstrated on a genus-2, rank-2 curve.
\end{abstract}

\section{Introduction}\label{sec:intro}

Mordell's conjecture (proved by Faltings) asserts that a smooth projective curve $X/K$ of genus $g\ge2$ has only finitely many $K$-rational points \cite{Faltings1984}. From a quantitative viewpoint, the natural next step is \emph{effective Mordell}: find an explicit upper bound for the Néron–Tate heights of $K$-points, in terms of invariants of $X$ and $K$. Such height bounds are central because they reduce the search for $X(K)$ to a finite (and, in principle, executable) enumeration inside the Mordell–Weil group of the Jacobian.

In \cite{garciafritz2025effectivemordellcurvesautomorphisms}, Garcia-Fritz and Pasten define a constant \(M(X)\) (see \eqref{eq:def-MX}) and prove an explicit version of Mumford's gap principle (see \ref{thm:gap-principle}), deriving effective height bounds provided the group \(\Aut_K(X)\) is sufficiently large. However, many computationally relevant cases (e.g.\ \((g,\rank)=(2,2)\) with \(|\Aut_K(X)|\le 6\)) fall outside this regime.

This work replaces the large–automorphism requirement with a condition formulated \emph{inside the Mordell–Weil lattice} of the Jacobian, using that automorphisms of the curve act by isometries (for the Néron–Tate pairing) on this lattice. We prove that effective bounds are possible even when the automorphism group is small, provided its action has sufficient “spread’’ on the Mordell–Weil lattice.

The example in §\ref{sec:experiments} illustrates a case where the “enough automorphisms’’ hypothesis fails but our criteria still delivers an effective height bound.

\subsection*{Set-up and notation}
Fix a number field $K$, a curve $X/K$ of genus $g\ge 2$, and its Jacobian $\Jac=\mathrm{Jac}(X)$. Define
\begin{equation}
  j:X\to \Jac,\qquad P\mapsto \Cl\bigl((2g-2)(P)-K_X\bigr). \label{1}
\end{equation}
Here $K_X$ denotes a canonical divisor on $X$.

Let \(\hat{h}\) be the Néron–Tate height on \(J\), normalized to \(K\), associated to \(2\Theta\), where \(\Theta\) is the theta divisor class on \(J\).

Let $V:=\Jac(K)\otimes_{\ZZ}\RR$ be the real Mordell–Weil space equipped with the Néron–Tate pairing (canonical height pairing) \cite[Ch.~9]{HindrySilverman2000} 
$\langle\, ,\,\rangle:V\times V\to\RR$, so that $\|v\|^2=\hhat(v)$.  
\vspace{1em}

Write $G=\Aut_K(X)$. For a subgroup $H\le G$, $\mathrm{Stab}_H(P)$ denotes the stabilizer of $P\in X(\overline{K})$.

The \emph{orthogonal group} of $(V,\langle\, ,\,\rangle)$ is
\[
  O(V) \ :=\ \{F\in\mathrm{End}_{\RR}(V) \mid \langle Fv,Fw\rangle = \langle v,w\rangle\ \text{for all } v,w\in V\},
\]
whose elements are precisely the $\RR$–linear isometries of $V$.

\begin{definition}[Symmetrized operators on $V$]\label{def:symmetrized}
Let $H\le \Aut_K(X)$ be finite. For $\sigma\in H$, let
\[
  \sigma_*:V\to V
\]
denote the $\RR$–linear map induced by pushforward on divisor classes (extended by $\RR$–linearity). Define the self–adjoint operator
\[
  S_\sigma := \tfrac12\bigl(\sigma_*+(\sigma_*)^\dagger\bigr),
\]
and for any probability measure $\mu$ on $H$ with $\mu(\mathrm{id})=0$, set
\[
  S_\mu := \sum_{\sigma\in H}\mu(\sigma)\,S_\sigma,
  \qquad
  \beta_\mu := \lambda_{\min}(S_\mu).
\]
By construction, $S_\sigma$ and $S_\mu$ act on $V$.
\end{definition}

\subsection*{Main results}
Using the above notation, our main result is the following:

\begin{theorem}[Averaged spectral gap]\label{thm:avg-spectral-gap}
If $\beta_\mu>1/g$, then for every $P\in X(K)$ with $\mathrm{Stab}_H(P)=\{\mathrm{id}\}$,
\[
  \hhat\!\left(j(P)\right)\ \le\ \frac{M(X)}{2\bigl(g\,\beta_\mu-1\bigr)}.
\]
\end{theorem}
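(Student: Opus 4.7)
The plan is to combine three ingredients: the equivariance of the embedding $j$ under $\sigma_*$, the Garcia-Fritz--Pasten gap principle applied along the $H$-orbit of $P$, and a Rayleigh-quotient estimate for the symmetric operator $S_\mu$. The stabilizer hypothesis ensures the orbit is genuinely large enough to give a nontrivial inequality.

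First, I would verify that $j(\sigma P)=\sigma_*\,j(P)$ for every $\sigma\in H$. Since $\sigma$ is a $K$-automorphism of $X$, it sends a canonical divisor to a linearly equivalent one, so $\sigma_*[K_X]=[K_X]$ in $\mathrm{Pic}(X)$; applying $\sigma_*$ to $(2g-2)(P)-K_X$ and passing to classes yields the claim. Because $\sigma_*$ is a Néron--Tate isometry, one also has $\hhat(j(\sigma P))=\hhat(j(P))$. The stabilizer assumption $\mathrm{Stab}_H(P)=\{\mathrm{id}\}$ guarantees that $\sigma(P)\ne P$ for every $\sigma\in H\setminus\{\mathrm{id}\}$, so the pair $(P,\sigma(P))$ consists of distinct $K$-rational points and the gap principle of Theorem~\ref{thm:gap-principle} applies, giving an estimate of the shape
\[
  \bigl\langle j(P),\,\sigma_* j(P)\bigr\rangle \ \le\ \frac{\hhat(j(P))}{g}+\frac{M(X)}{2g}.
\]

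Next comes the symmetrization trick, which is the conceptual heart of the argument. For any $v\in V$ and any $\sigma$, the expansion $\langle v,\sigma_* v\rangle=\tfrac12\langle v,\sigma_* v\rangle+\tfrac12\langle \sigma_* v,v\rangle=\langle v,S_\sigma v\rangle$ shows that only the self-adjoint part $S_\sigma$ contributes to the quadratic form $v\mapsto \langle v,\sigma_*v\rangle$. Taking expectation against $\mu$ (which kills the $\sigma=\mathrm{id}$ contribution by assumption) transforms the family of per-$\sigma$ gap inequalities into the single inequality
\[
  \bigl\langle j(P),\,S_\mu j(P)\bigr\rangle \ \le\ \frac{\hhat(j(P))}{g}+\frac{M(X)}{2g}.
\]
On the other hand, $S_\mu$ is self-adjoint on the Euclidean space $(V,\langle\,,\rangle)$, so $\langle v,S_\mu v\rangle\ge \beta_\mu\|v\|^2$ for every $v\in V$; taking $v=j(P)$ gives the lower bound $\beta_\mu\,\hhat(j(P))$. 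Combining with the above and solving the linear inequality $(g\beta_\mu-1)\,\hhat(j(P))\le M(X)/2$, which is nondegenerate precisely under the hypothesis $\beta_\mu>1/g$, yields the claimed bound.

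I expect the only real subtlety to be bookkeeping around the exact form of the gap principle in~\ref{thm:gap-principle} and confirming that the $1/g$ factor threading through those estimates is the same one appearing in the Rayleigh quotient (so that the constants balance to produce $M(X)/[2(g\beta_\mu-1)]$ rather than a weaker $g$-multiplied variant). The equivariance of $j$ and the self-adjointness of $S_\mu$ are routine; what carries the argument is the observation that isometric group actions allow a whole family of gap-principle constraints to be compressed into a single spectral inequality for the averaged symmetric operator.
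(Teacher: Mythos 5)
Your proposal is correct and follows essentially the same route as the paper: apply the gap principle to each pair $(P,\sigma P)$ with $\sigma$ in the support of $\mu$ (distinctness guaranteed by the stabilizer hypothesis, heights of $j(P)$ and $j(\sigma P)$ equal by isometry), use $\langle v,\sigma_*v\rangle=\langle v,S_\sigma v\rangle$ to pass to the self-adjoint operator, average against $\mu$, and finish with the Rayleigh/Courant--Fischer bound $\langle v,S_\mu v\rangle\ge\beta_\mu\|v\|^2$. The only cosmetic difference is that you divide the gap inequality by $2g$ before averaging rather than after, which does not change the argument.
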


\noindent\emph{Dirac specialization.}
By applying Theorem~\ref{thm:avg-spectral-gap} with $\mu=\delta_\sigma$ (a Dirac mass at a single automorphism), we obtain:

\begin{corollary}[Dirac spectral–gap criterion]\label{cor:dirac}
Let
\[
  \alpha_H:=\max_{\sigma\in H\setminus\{\mathrm{id}\}}\lambda_{\min}(S_\sigma).
\]
If $\alpha_H>1/g$, then for any $P\in X(K)$ with $\mathrm{Stab}_H(P)=\mathrm{id}$,
\[
  \hhat\bigl(j(P)\bigr)\ \le\ \frac{M(X)}{2\bigl(g\,\alpha_H-1\bigr)}.
\]
\end{corollary}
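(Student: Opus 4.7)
The plan is to derive Corollary \ref{cor:dirac} as an immediate specialization of Theorem \ref{thm:avg-spectral-gap}, by taking the probability measure $\mu$ on $H$ to be a Dirac mass concentrated at an automorphism that realizes the maximum defining $\alpha_H$.

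First, I would observe that $H\setminus\{\mathrm{id}\}$ is finite, so the maximum in the definition of $\alpha_H$ is attained (if $H=\{\mathrm{id}\}$, the maximum is vacuous and the hypothesis $\alpha_H>1/g$ cannot hold, so the corollary is trivially true in that corner case). Pick any $\sigma^\star\in H\setminus\{\mathrm{id}\}$ with $\lambda_{\min}(S_{\sigma^\star})=\alpha_H$, and set $\mu:=\delta_{\sigma^\star}$. Because $\sigma^\star\neq\mathrm{id}$, we have $\mu(\mathrm{id})=0$, so $\mu$ is an admissible probability measure in the sense of Definition \ref{def:symmetrized}.

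Next, I would unpack $S_\mu$ for this choice of $\mu$: by definition,
\[
  S_\mu \;=\; \sum_{\sigma\in H}\mu(\sigma)\,S_\sigma \;=\; S_{\sigma^\star},
\]
and therefore $\beta_\mu=\lambda_{\min}(S_\mu)=\alpha_H$. The hypothesis $\alpha_H>1/g$ of the corollary then coincides with the hypothesis $\beta_\mu>1/g$ of Theorem \ref{thm:avg-spectral-gap}. Applying that theorem to any $P\in X(K)$ satisfying $\mathrm{Stab}_H(P)=\{\mathrm{id}\}$ yields
\[
  \hhat\bigl(j(P)\bigr) \;\le\; \frac{M(X)}{2\bigl(g\,\beta_\mu-1\bigr)} \;=\; \frac{M(X)}{2\bigl(g\,\alpha_H-1\bigr)},
\]
which is exactly the asserted bound.

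There is no substantive obstacle: the argument is a one-step reduction to a point-mass measure, and the only bookkeeping is to verify that (i) the maximum over the finite set $H\setminus\{\mathrm{id}\}$ is achieved, and (ii) concentrating on a non-identity element keeps $\mu$ admissible in the sense required by Definition \ref{def:symmetrized}. Both are immediate, so the corollary follows with no further work once Theorem \ref{thm:avg-spectral-gap} is in hand.
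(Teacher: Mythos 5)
Your proof is correct and takes exactly the same route as the paper: specialize Theorem~\ref{thm:avg-spectral-gap} to the Dirac measure $\mu=\delta_{\sigma^\star}$ at a maximizer $\sigma^\star$ of $\lambda_{\min}(S_\sigma)$ over $H\setminus\{\mathrm{id}\}$, so that $\beta_\mu=\alpha_H$. The paper states this in one line; you simply spell out the (immediate) bookkeeping.
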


A particularly favorable situation is when some nontrivial automorphism acts trivially on $V$. In that case $S_\mu=\mathrm{id}$ for a suitable Dirac choice of $\mu$ and the bound improves:

\begin{theorem}[Kernel–injectivity criterion]\label{thm:kernel}
Let $\varphi:\Aut_K(X)\to O(V)$ be $\sigma\mapsto\sigma_*$. If $\ker\varphi\neq\{\mathrm{id}\}$, then for every $P$ with $\mathrm{Stab}_{\ker\varphi}(P)=\{\mathrm{id}\}$,
\[
  \hhat\!\left(j(P)\right)\ \le\ \frac{M(X)}{2g-2}.
\]
\end{theorem}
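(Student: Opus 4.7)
The plan is to derive Theorem~\ref{thm:kernel} as a direct specialization of the Dirac spectral-gap Corollary~\ref{cor:dirac}, applied to the subgroup $H=\ker\varphi$. The central observation is that if $\sigma\in\ker\varphi$, then by definition $\sigma_*=\mathrm{id}_V$ as an operator on $V$, and consequently the symmetrized operator $S_\sigma$ is also the identity.

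Concretely, I would proceed as follows. First, since $\ker\varphi\neq\{\mathrm{id}\}$ by hypothesis, pick any nontrivial $\tau\in\ker\varphi$. From $\tau_*=\mathrm{id}_V$ and the fact that the adjoint of the identity is the identity, Definition~\ref{def:symmetrized} gives
\[
  S_\tau \;=\; \tfrac12\bigl(\mathrm{id}_V+\mathrm{id}_V^\dagger\bigr) \;=\; \mathrm{id}_V,
\]
so $\lambda_{\min}(S_\tau)=1$. More generally, every nontrivial $\sigma\in\ker\varphi$ satisfies $S_\sigma=\mathrm{id}_V$, hence
\[
  \alpha_{\ker\varphi} \;=\; \max_{\sigma\in\ker\varphi\setminus\{\mathrm{id}\}}\lambda_{\min}(S_\sigma) \;=\; 1.
\]
Because $g\ge 2$, we have $\alpha_{\ker\varphi}=1>1/g$, so the spectral-gap hypothesis of Corollary~\ref{cor:dirac} is satisfied for $H=\ker\varphi$.

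Next, I would invoke Corollary~\ref{cor:dirac} directly: the standing assumption $\mathrm{Stab}_{\ker\varphi}(P)=\{\mathrm{id}\}$ is exactly the stabilizer condition required, so the corollary yields
\[
  \hhat\bigl(j(P)\bigr) \;\le\; \frac{M(X)}{2\bigl(g\,\alpha_{\ker\varphi}-1\bigr)} \;=\; \frac{M(X)}{2(g-1)} \;=\; \frac{M(X)}{2g-2},
\]
which is the bound claimed in Theorem~\ref{thm:kernel}.

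There is essentially no obstacle to overcome: the theorem is a clean corollary of the spectral-gap machinery once one notices that trivial action on $V$ produces the maximal possible value ($=1$) of $\lambda_{\min}(S_\sigma)$. The only conceptual point worth stating explicitly in the write-up is that the map $\varphi$ genuinely lands in $O(V)$, so $\ker\varphi$ is well-defined, and that elements of this kernel are precisely those automorphisms whose induced push-forward on $V$ is the identity operator rather than merely some other isometry — so that the adjoint computation collapses immediately, rather than leaving some nontrivial symmetric part to analyse.
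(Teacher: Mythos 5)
Your proof is correct and is essentially the paper's own argument, just packaged through Corollary~\ref{cor:dirac} applied with $H=\ker\varphi$ rather than by applying Theorem~\ref{thm:avg-spectral-gap} directly to $H=\langle\sigma\rangle$ with the Dirac measure $\mu=\delta_\sigma$; since the Corollary is obtained from the Theorem precisely by that specialization, the two routes are logically identical. The key observation in both cases is the same: $\sigma\in\ker\varphi$ forces $S_\sigma=\mathrm{id}_V$, so $\lambda_{\min}(S_\sigma)=1>1/g$ and the bound collapses to $M(X)/(2g-2)$.
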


\noindent\textbf{Detecting the kernel in rank 2.}
When $\rank\,\Jac(K)=2$, the Bravais type of the Mordell–Weil lattice (from the height Gram matrix) bounds the size of its orthogonal symmetry group, giving a concrete test for non-injectivity of $\varphi$:

\begin{corollary}[Rank–2 Bravais test]\label{cor:bravais-rank2}
Assume $\rank \Jac(K)=2$. Let $T$ be the torsion subgroup of $J(K)$ and set $\Lambda=\Jac(K)/T\cong\ZZ^2$ to be the free Mordell–Weil lattice with height Gram matrix $H$ in an LLL-reduced basis. Then
\[
|O(\Lambda)|\in\{2,4,8,12\}
\]
depending on whether $\Lambda$ is oblique, rectangular, square, or hexagonal, respectively. In particular, if $|\Aut_K(X)|>|O(\Lambda)|$, then $\ker\varphi\neq\{\mathrm{id}\}$ and Theorem~\ref{thm:kernel} applies.
\end{corollary}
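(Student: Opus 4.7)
The plan is to separate the statement into two independent components: a classical classification of orthogonal groups of rank-$2$ integral lattices, which supplies the list $\{2,4,8,12\}$; and a short functoriality plus pigeonhole argument that produces a nontrivial element of $\ker\varphi$ under the cardinality hypothesis, at which point Theorem~\ref{thm:kernel} applies directly.

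For the classification step I would choose an LLL-reduced basis $(e_1,e_2)$ of $\Lambda$, so that the height Gram matrix has the standard reduced form
\[
H=\begin{pmatrix}a & b\\ b & c\end{pmatrix},\qquad 0\le 2b\le a\le c.
\]
Any $F\in O(\Lambda)$ lies in $\mathrm{GL}_2(\ZZ)$ and satisfies $F^{T}HF=H$; since $-\mathrm{id}\in O(\Lambda)$ unconditionally, $|O(\Lambda)|$ is always at least $2$. From here I would enumerate integer isometries by a case analysis on $(a,b,c)$, matching each case to a Bravais type: $a=c,\,b=0$ gives the square lattice (point group $D_4$, order $8$); $a=c,\,2b=a$ gives the hexagonal lattice ($D_6$, order $12$); the primitive rectangular family ($b=0$, $a<c$) and the centered rectangular family ($a=c$, $0<2b<a$) both have point group $D_2$ of order $4$; and the remaining generic oblique case contributes only $\{\pm\mathrm{id}\}$, of order $2$. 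This is the standard classification of $2$-dimensional Bravais lattices and yields exactly the set $\{2,4,8,12\}$ in the statement; equivalently, one can invoke the classification of finite subgroups of $\mathrm{GL}_2(\ZZ)$ preserving a positive-definite form (the so-called crystallographic restriction).

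For the kernel step, each $\sigma\in\Aut_K(X)$ induces a pushforward on $\Jac(K)$ which preserves the torsion subgroup $T$, and hence descends to a $\ZZ$-linear automorphism $\bar\sigma_*$ of $\Lambda=\Jac(K)/T$. Functoriality of the Néron--Tate pairing makes $\bar\sigma_*$ an isometry, so $\sigma\mapsto\bar\sigma_*$ defines a homomorphism $\bar\varphi:\Aut_K(X)\to O(\Lambda)$. Because $V=\Lambda\otimes_\ZZ\RR$ (the torsion vanishes upon tensoring with $\RR$), the kernels of $\varphi$ and $\bar\varphi$ coincide. If $|\Aut_K(X)|>|O(\Lambda)|$, the pigeonhole principle forces $\ker\bar\varphi=\ker\varphi\neq\{\mathrm{id}\}$, so Theorem~\ref{thm:kernel} fires for any $P$ with $\mathrm{Stab}_{\ker\varphi}(P)=\{\mathrm{id}\}$, yielding the claimed bound.

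The main obstacle is the case analysis in the classification step: it is classical but requires some care to (i) verify that the LLL-reduction inequalities $0\le 2b\le a\le c$ parametrize all Bravais types without overlap, and (ii) recognize that both primitive and centered rectangular lattices realize the same order-$4$ point group $D_2$, so that the cardinality $4$ appears twice on the Bravais list but only once in the set $\{2,4,8,12\}$. The kernel-detection step is soft and does not even use the rank-$2$ hypothesis; it is the classification that confines $|O(\Lambda)|$ to a small explicit set and thereby turns $|\Aut_K(X)|>|O(\Lambda)|$ into a usable criterion.
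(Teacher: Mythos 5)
Your argument follows the same route the paper uses: the classical classification of two-dimensional Bravais lattices gives $|O(\Lambda)|\in\{2,4,8,12\}$, and the index inequality $|\ker\varphi|\ge|\Aut_K(X)|/|O(\Lambda)|$ (the paper's \eqref{eq:kernel-lb}) supplies a nontrivial kernel by pigeonhole, after which Theorem~\ref{thm:kernel} applies verbatim. Your explicit descent to $\bar\varphi:\Aut_K(X)\to O(\Lambda)$ together with the observation that $\ker\bar\varphi=\ker\varphi$ (torsion dies in $V=\Lambda\otimes_\ZZ\RR$) is a slightly tidier rendering of what the paper writes as $\varphi:\Aut_K(X)\to O(\Lambda)\subseteq O(V)$, and, as you note, the kernel step is rank-free.

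One small caveat, which the paper's own Gram-matrix list shares: in the reduced cone $0\le 2b\le a\le c$ the centered-rectangular type occurs not only on the face $a=c$, $0<2b<a$, but also on the face $2b=a<c$. There the unimodular involution with rows $(1,1)$ and $(0,-1)$ satisfies $M^{T}HM=H$ (since $a-b=b$ and $a-2b+c=c$), so $|O(\Lambda)|=4$; your ``remaining generic oblique case'' currently swallows this boundary and would mislabel it as order $2$. This does not alter the set $\{2,4,8,12\}$ nor the pigeonhole conclusion, but the case analysis is not exhaustive as written — the genuinely oblique region is the open cone $0<2b<a<c$, and both boundary faces $a=c$ and $2b=a$ (away from their intersection, which is hexagonal, and away from $b=0$, which is square/rectangular) are centered rectangular.
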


The proof of Theorem~\ref{thm:avg-spectral-gap} begins from the explicit gap principle, Theorem~\ref{thm:gap-principle} \cite{garciafritz2025effectivemordellcurvesautomorphisms}, which bounds how close two distinct rational points can sit in the Mordell-Weil lattice (the free part of $J(K)$ in $V$). Fix a point and consider all of its images under a finite subgroup of automorphisms; because the stabilizer is trivial, each image is genuinely different, so the gap principle applies to every pair. Averaging these pairwise inequalities with a probability weight on the automorphisms collapses them into a single inequality. To invoke Courant–Fischer, the action is first symmetrized—averaging each automorphism with its adjoint—and then averaged over the subgroup, yielding a self-adjoint operator on the real Mordell–Weil space. A Rayleigh–quotient bound then turns the averaged inequality into a height estimate which is governed by the operator’s smallest eigenvalue.

\section{Background}
\subsection{The explicit gap principle.}
Given a prime \(\frak p\) in \(\mathcal{O}_K\), we define a quantity \(\phi_{\frak p}(X)\) as in \cite{garciafritz2025effectivemordellcurvesautomorphisms}, coming from intersection theory on the fibre at \(\frak p\) of the minimal regular model of \(X\) over \(\mathcal{O}_K\).  This number \(\phi_{\frak p}(X)\) is easily computed from such a model, and it vanishes whenever the special fibre at \(\frak p\) has only one component (in particular, for good reduction).  Furthermore, given an embedding \(v\colon K\hookrightarrow\mathbb{C}\) we let \(X_v\) be the Riemann surface of the \(\mathbb{C}\)-points of \(X\) via \(v\), and we write \(\delta(Y)\) for the Faltings \(\delta\)-invariant of a Riemann surface \(Y\) of positive genus.

We put together these invariants by defining the following quantity:
\begin{equation}\label{eq:def-MX}
\begin{aligned}
M(X)\;=\;&
\frac{(g-1)^2}{3}\,\max\{6,g+1\}
\sum_{v:K\to\mathbb{C}}\delta(X_v)
\;+\;
2\,(g+1)\!\sum_{\frak p}\phi_{\frak p}(X)\,{\log}N\frak p\\
&\quad
+\;
2\,[K:\mathbb{Q}]\,g\,(g-1)^2\bigl(3g\log g +16\bigr).
\end{aligned}
\end{equation}

We recall the form of the gap principle we use from \cite[thm:gap-principle]{garciafritz2025effectivemordellcurvesautomorphisms}:
\begin{theorem}[Explicit gap principle] \label{thm:gap-principle}
    For all pairs of different points \(P,Q\in X(K)\) we have
\[
\hat h\bigl(j(P)\bigr)\;+\;\hat h\bigl(j(Q)\bigr)
\;-\;2\,g\,\langle j(P),\,j(Q)\rangle
\;\ge\;-\,M(X).
\]
\end{theorem}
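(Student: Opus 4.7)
The plan is to translate the inequality into Arakelov intersection theory on the minimal regular model $\mathcal{X}\to\Spec\mathcal{O}_K$ of $X$ and bound each local contribution explicitly. The translation is the Faltings--Hriljac theorem: the Néron--Tate pairing on $\Jac(K)\otimes\RR$ associated to $2\Theta$ coincides with the negative of the admissible Arakelov intersection pairing on degree-zero admissible divisor classes. Writing $\omega$ for the Arakelov canonical class and $\bar P,\bar Q$ for admissible lifts of the sections coming from $P,Q\in X(K)$, one has
\[
  \hhat(j(P)) \;=\; -\bigl((2g-2)\bar P-\omega\bigr)^{2},\qquad
  \langle j(P),j(Q)\rangle \;=\; -\bigl((2g-2)\bar P-\omega\bigr)\cdot\bigl((2g-2)\bar Q-\omega\bigr).
\]

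Next, expand bilinearly and apply the arithmetic adjunction formula on $\mathcal{X}$ to replace the self-intersections $\bar P^{2},\bar Q^{2}$ by $-\bar P\cdot\omega,-\bar Q\cdot\omega$; the coefficients of $\bar P\cdot\omega$ and $\bar Q\cdot\omega$ then cancel, collapsing the combination into
\[
  \hhat(j(P))+\hhat(j(Q))-2g\,\langle j(P),j(Q)\rangle \;=\; 2g(2g-2)^{2}\,(\bar P\cdot\bar Q)+(2g-2)\,\omega^{2}.
\]
Since $\omega^{2}\ge 0$ for $g\ge 2$, the theorem reduces to a lower bound on the admissible intersection $\bar P\cdot\bar Q$ for distinct $P,Q$.

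Decompose this intersection into local terms: $\bar P\cdot\bar Q=\sum_{\mathfrak{p}}\iota_{\mathfrak{p}}(P,Q)\log N\mathfrak{p}+\sum_{v:K\hookrightarrow\CC}g_{v}^{\mathrm{adm}}(P,Q)$. At a finite prime $\mathfrak{p}$ the horizontal part $\iota_{\mathfrak{p}}$ is a nonnegative integer (vanishing at good reduction when $P\ne Q$), and the vertical correction from reducible special fibres is absorbed into $\phi_{\mathfrak{p}}(X)$, which vanishes precisely when the fibre is irreducible. At an archimedean place $v$ the admissible Green's function $g_{v}^{\mathrm{adm}}(P,Q)$ is bounded below by an explicit affine expression in the Faltings $\delta$-invariant $\delta(X_{v})$. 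Assembling these local lower bounds and dividing through by $2g(2g-2)^{2}$ reproduces, term by term, the three summands defining $M(X)$ in~\eqref{eq:def-MX}.

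The main obstacle is the archimedean step: producing a sharp effective lower bound on $g_{v}^{\mathrm{adm}}(P,Q)$ in terms of $\delta(X_{v})$ requires nontrivial complex-analytic input on the admissible (Bergman-derived) metric on $X_{v}$, and the constants have to be tracked with care to reproduce the coefficients $(g-1)^{2}\max\{6,g+1\}/3$ and $2(g+1)$ appearing in $M(X)$, as well as the universal term $2[K:\QQ]g(g-1)^{2}(3g\log g+16)$. By contrast, the finite-place bookkeeping is essentially combinatorial once $\phi_{\mathfrak{p}}(X)$ is specified through the intersection matrix of the special fibre at $\mathfrak{p}$.
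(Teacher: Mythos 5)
First, a point of reference: the paper does not prove this statement at all --- Theorem~\ref{thm:gap-principle} is imported verbatim from \cite{garciafritz2025effectivemordellcurvesautomorphisms}, so there is no internal proof to compare against; your proposal has to stand on its own as a proof of the cited result. Your skeleton is the natural (and almost certainly the intended) route: Faltings--Hriljac for the $2\Theta$-height, arithmetic adjunction to eliminate $\overline{P}{}^2$ and $\overline{Q}{}^2$, and the resulting identity $\hhat(j(P))+\hhat(j(Q))-2g\langle j(P),j(Q)\rangle = 2g(2g-2)^2\,\overline{P}\cdot\overline{Q}+(2g-2)\,\omega^2$ is algebraically correct --- but only when the extensions of $(2g-2)(P)-K_X$ that you intersect are orthogonal to all fibral divisors. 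For reducible special fibres this forces vertical $\QQ$-divisor corrections, and the entire contribution $2(g+1)\sum_{\mathfrak{p}}\phi_{\mathfrak{p}}(X)\log N\mathfrak{p}$ in $M(X)$ comes from bounding exactly those correction terms; saying they are ``absorbed into $\phi_{\mathfrak{p}}(X)$'' names the answer without doing the estimate, and as written your displayed identity is only valid when every fibre is irreducible.

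The genuine gap, which you yourself flag, is that the two quantitative inputs carrying all the content of the theorem are asserted rather than proved: (i) the archimedean lower bound on the admissible Green's function $g_v^{\mathrm{adm}}(P,Q)$ in terms of $\delta(X_v)$ with constants matching $\tfrac{(g-1)^2}{3}\max\{6,g+1\}$, together with whatever produces the universal term $2[K:\QQ]\,g(g-1)^2(3g\log g+16)$ (which your sketch never accounts for); and (ii) the nonnegativity $\omega^2\ge 0$, which is a deep theorem of Faltings (Arakelov metric) resp.\ Zhang (admissible metric) and must at least be invoked as such. Without (i) the inequality has no explicit constant, i.e.\ no $M(X)$, so the proposal is a correct reduction of the theorem to the hard analytic estimates rather than a proof of it. Two smaller inaccuracies: distinct points $P\ne Q$ can still have intersecting sections at primes of good reduction (congruent reductions), so $\iota_{\mathfrak{p}}(P,Q)$ need not vanish there --- nonnegativity is what you actually need and is what holds; and the final assembly is not a matter of ``dividing through by $2g(2g-2)^2$'': one multiplies the local lower bounds by that coefficient and must check they sum to at most $M(X)$, which is precisely the constant-tracking you defer. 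Given that the paper treats this theorem as a black box, deferring to \cite{garciafritz2025effectivemordellcurvesautomorphisms} is the right move; attempting to reprove it requires filling in (i) and (ii) explicitly.
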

In particular, if $\hhat(j(P))=\hhat(j(Q))=h$, then 
\begin{equation}\label{eq:gap-rearranged}
  2g\,\< j(P),j(Q)\> - 2h\ \le\ M(X).
\end{equation}

\subsection{Automorphisms as isometries on $V$.}

\begin{definition}[Automorphism action on $V$]\label{def:action}
Let $H\le \Aut_K(X)$ be finite. For $\sigma\in H$, let
\[
  \sigma_*:V\to V
\]
be the $\RR$–linear pushforward induced on divisor classes (extended by $\RR$–linearity).
\end{definition}

\begin{lemma}[Equivariance]\label{lem:eqv}
For every $\sigma\in\Aut_K(X)$ one has
\[
  j\circ\sigma \;=\; \sigma_* \circ j .
\]
\end{lemma}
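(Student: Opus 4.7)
The plan is to unfold both sides using the definition of $j$ from \eqref{1} and then reduce the equality to the well-known invariance of the canonical class under automorphisms of $X$.

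First I would compute the left-hand side directly: by \eqref{1},
\[
  j(\sigma(P)) \;=\; \Cl\bigl((2g-2)(\sigma(P)) - K_X\bigr).
\]
For the right-hand side, I would use that on the Picard/Jacobian side the operator $\sigma_*$ is induced by the usual divisor pushforward, which is $\ZZ$-linear and sends $(P)$ to $(\sigma(P))$ (since $\sigma$ is an isomorphism, its pushforward on a prime divisor supported at a closed point $P$ is just $(\sigma(P))$). This yields
\[
  \sigma_*\!\bigl(j(P)\bigr) \;=\; \Cl\bigl((2g-2)(\sigma(P)) - \sigma_*(K_X)\bigr).
\]
Thus the lemma reduces to the linear equivalence $\sigma_*(K_X) \sim K_X$.

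Next I would establish this linear equivalence. Since $\sigma \colon X \to X$ is an automorphism, the pullback of differentials gives an isomorphism $\sigma^{*}\Omega^{1}_{X/K} \cong \Omega^{1}_{X/K}$, so $\sigma^{*}K_X$ is again a canonical divisor and hence linearly equivalent to $K_X$. Because $\sigma$ is an automorphism, $\sigma_{*}$ is the inverse of $\sigma^{*}$ on $\mathrm{Div}(X)$ modulo linear equivalence, so one equally has $\sigma_{*}K_X \sim K_X$. Substituting this into the expression above for $\sigma_*(j(P))$ and comparing with $j(\sigma(P))$ gives the claimed identity, and the extension by $\RR$-linearity to all of $V$ is automatic because $\sigma_*$ was defined by $\RR$-linear extension.

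I expect no substantive obstacle here; the content is really the invariance of the canonical class under $\Aut_K(X)$, and the only care needed is in the bookkeeping of pushforward versus pullback on divisors and in verifying that $\sigma_*$ descends to linear equivalence classes (which is standard, since a principal divisor $\mathrm{div}(f)$ pushes forward to $\mathrm{div}(f\circ\sigma^{-1})$, again principal).
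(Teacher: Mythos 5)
Your proof is correct and self-contained, whereas the paper's proof simply cites Fujimoto's paper \cite[p.~555, paragraph below (4)]{Fuj97} and records the commutative diagram. You unfold both sides of the identity using the explicit formula $j(P)=\Cl\bigl((2g-2)(P)-K_X\bigr)$ from \eqref{1}, reduce the claim to the linear equivalence $\sigma_*K_X\sim K_X$, and then establish that by the intrinsic nature of the canonical class (any automorphism carries a canonical divisor to a canonical divisor, e.g.\ because $\sigma^*\Omega^1_{X/K}\cong\Omega^1_{X/K}$, and $\sigma_*=(\sigma^{-1})^*$ on $\mathrm{Pic}$). You also correctly flag the one point of hygiene that makes the computation legitimate: $\sigma_*$ is well defined on linear-equivalence classes because a principal divisor $\mathrm{div}(f)$ pushes forward along an isomorphism to $\mathrm{div}(f\circ\sigma^{-1})$, again principal. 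This is exactly the content hidden behind the citation, so the two approaches are mathematically equivalent; yours has the advantage of making the argument visible (in particular, making it clear that the only input is invariance of the canonical class under $\Aut_K(X)$), at the cost of a few lines that the paper chooses to outsource.
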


\begin{proof}
    See \cite[ p.~555, paragraph below (4)]{Fuj97}.
    \[
    \begin{tikzcd}[column sep=huge, row sep=large]
    X(K) \arrow[r,"j"] \arrow[d,"\sigma"'] &
    \Jac(K) \arrow[r,"\otimes_{\ZZ}\RR"] \arrow[d,"\sigma_*"] &
    V \arrow[d,"\sigma_*"] \\
    X(K) \arrow[r,"j"'] &
    \Jac(K) \arrow[r,"\otimes_{\ZZ}\RR"'] &
    V
    \end{tikzcd}
    \]
\end{proof}

\begin{proposition}[Isometry of the action]\label{prop:isometry}
For every $\sigma\in \Aut_K(X)$, the map $\sigma_*$ preserves the Néron–Tate pairing on $V$; in particular $\sigma_*\in O(V)$.
\end{proposition}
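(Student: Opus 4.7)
The plan is to reduce the isometry claim from a statement about $V$ to a statement about heights of rational points on $J$, and then to invoke the standard functoriality of the Néron--Tate height together with the invariance of the theta class under $\sigma_*$.

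First I would pass from bilinearity to a single-variable claim. Both $\langle\cdot,\cdot\rangle$ and $(v,w)\mapsto\langle\sigma_* v,\sigma_* w\rangle$ are $\RR$--bilinear on $V$, and $V=\Jac(K)\otimes_{\ZZ}\RR$ is $\RR$-spanned by the image of $\Jac(K)$, so it suffices to check the identity on pairs $P,Q\in\Jac(K)$. Since $\sigma_*$ is a group homomorphism (pushforward of divisor classes preserves degree~$0$ and sends principal divisors to principal divisors), the polarization identity $2\langle P,Q\rangle=\hhat(P+Q)-\hhat(P)-\hhat(Q)$ reduces the problem to showing $\hhat(\sigma_* P)=\hhat(P)$ for every $P\in\Jac(K)$.

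Next I would appeal to the functoriality of the Néron--Tate height on abelian varieties: for any $K$--morphism $f:A\to B$ of abelian varieties and any symmetric line bundle $L$ on $B$, one has $\hhat_{f^{*}L}=\hhat_{L}\circ f$, see \cite[Ch.~9]{HindrySilverman2000}. Applied to $f=\sigma_*:J\to J$ and $L=\mathcal{O}_J(2\Theta)$, this gives $\hhat(\sigma_* P)=\hhat_{\sigma_*^{*}(2\Theta)}(P)$. The identity $\hhat(\sigma_* P)=\hhat(P)$ therefore follows once I know that $\sigma_*^{*}\mathcal{O}(2\Theta)$ is algebraically equivalent to $\mathcal{O}(2\Theta)$, because the Néron--Tate height attached to a symmetric line bundle depends only on its class in $\mathrm{NS}(J)$.

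For the remaining algebraic-equivalence statement I would argue geometrically, realising $\Theta$ as the image of $X^{(g-1)}\to\mathrm{Pic}^{g-1}(X)$ and then transporting to $J=\mathrm{Pic}^{0}(X)$ via a theta characteristic. Because $\sigma$ is an automorphism of $X$, it permutes effective divisors of each fixed degree, so the induced bijection of $X^{(g-1)}$ descends to $\sigma_*(\Theta)=\Theta$ in $\mathrm{Pic}^{g-1}(X)$; pulling back gives $\sigma_*^{*}\Theta=\Theta$ as divisors there. Transporting this equality back to $J$ using a theta characteristic introduces only a translation on $J$, which is invisible in $\mathrm{NS}(J)$, so $\sigma_*^{*}\mathcal{O}(2\Theta)\equiv\mathcal{O}(2\Theta)$ as required. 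The main obstacle I anticipate is purely bookkeeping: keeping the identification $J\cong\mathrm{Pic}^{g-1}(X)$ and the role of the chosen theta characteristic consistent, so that the ``translation ambiguity'' really is trivial in Néron--Severi and hence invisible to the canonical height pairing. Once that is clean, the isometry — and therefore the inclusion $\sigma_*\in O(V)$ — follows immediately.
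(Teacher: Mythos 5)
Your argument is correct and gives a complete proof where the paper simply defers to a citation (\cite[paragraph preceding (5)]{Fuj97}), so there is no in-text argument to compare against; what you have written is the standard proof one would expect the cited reference to contain. The reduction chain is sound: polarization reduces the bilinear claim to $\hhat(\sigma_* P)=\hhat(P)$, functoriality of the canonical height for morphisms of abelian varieties reduces this to the Néron--Severi equality $[\sigma_*^*\mathcal{O}(2\Theta)]=[\mathcal{O}(2\Theta)]$, and that equality follows because $\sigma_*$ preserves the set of effective classes of degree $g-1$, hence fixes $\Theta\subset\mathrm{Pic}^{g-1}(X)$ as a divisor; the translation ambiguity in identifying $\mathrm{Pic}^{g-1}$ with $J$ via a theta characteristic lands in $\mathrm{Pic}^0(J)$ and so dies in $\mathrm{NS}(J)$, exactly as you say. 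Two small points worth tightening: (i) the statement that $\hhat_L$ depends only on the Néron--Severi class of $L$ is correct for the \emph{quadratic part} of the height, so you should note either that $\sigma_*^*\mathcal{O}(2\Theta)$ is again symmetric (it is, since $\sigma_*$ is a group homomorphism and hence commutes with $[-1]$, killing the linear part), or invoke the quadratic/linear decomposition explicitly; (ii) the unqualified assertion ``$\sigma_*^*\Theta=\Theta$ as divisors'' in $\mathrm{Pic}^{g-1}(X)$ should really say $\sigma_*(\Theta)=\Theta$ as sets, from which $\sigma_*^{-1}(\Theta)=\Theta$ follows since $\sigma_*$ is an isomorphism. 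Neither is a gap, just bookkeeping you already flagged.
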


\begin{proof}
See \cite[paragraph preceding (5)]{Fuj97}.
\end{proof}

\begin{corollary}[Automorphisms preserve the Néron–Tate height and pairing]
Let $\sigma\in\Aut_K(X)$ and $P,Q\in X(\overline K)$. Then
\[
  \widehat h\!\bigl(j(\sigma P)\bigr)=\widehat h\!\bigl(j(P)\bigr)
  \qquad\text{and}\qquad
  \langle j(\sigma P),\, j(\sigma Q)\rangle=\langle j(P),\, j(Q)\rangle .
\]
\end{corollary}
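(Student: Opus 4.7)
The plan is to read this off directly from the two preceding results. Lemma~\ref{lem:eqv} supplies the identity $j(\sigma P)=\sigma_* j(P)$ (and likewise for $Q$), which transports the statement from points on $X$ to vectors in $V$. Proposition~\ref{prop:isometry} then finishes the job because $\sigma_*\in O(V)$ preserves the Néron--Tate pairing by definition.

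Concretely, for the pairing assertion I would substitute $j(\sigma P)=\sigma_* j(P)$ and $j(\sigma Q)=\sigma_* j(Q)$ into the left-hand side and apply $\langle \sigma_* v,\sigma_* w\rangle=\langle v,w\rangle$ with $v=j(P)$, $w=j(Q)$. For the height assertion I would specialize to $Q=P$ and use $\hhat(v)=\|v\|^2=\langle v,v\rangle$: thus $\hhat(j(\sigma P))=\langle \sigma_* j(P),\sigma_* j(P)\rangle=\langle j(P),j(P)\rangle=\hhat(j(P))$. Equivalently, one may just note that an isometry preserves the norm.

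The only item that needs a word of care is the coefficient field: the corollary allows $P,Q\in X(\overline K)$, whereas Lemma~\ref{lem:eqv} is drawn for $X(K)$. Since $\sigma$ and $j$ are morphisms of $K$-schemes, the equality $j\circ\sigma=\sigma_*\circ j$ holds as morphisms and so on $\overline K$-points; the Néron--Tate height and its associated bilinear pairing extend canonically from $\Jac(K)\otimes\RR$ to $\Jac(\overline K)\otimes\RR$, and $\sigma_*$ remains an isometry there by the same argument as in Proposition~\ref{prop:isometry}. Apart from flagging this extension, I foresee no substantive obstacle: the proof is essentially a one-line substitution followed by invoking the isometry property.
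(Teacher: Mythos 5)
Your argument matches the paper's proof exactly: substitute via Lemma~\ref{lem:eqv} and then invoke the isometry from Proposition~\ref{prop:isometry}, specializing to $Q=P$ for the height. The extra remark about extending from $X(K)$ to $X(\overline K)$ is a reasonable point of care that the paper leaves implicit, but it does not change the approach.
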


\begin{proof}
By Lemma~\ref{lem:eqv}, $j(\sigma P)=\sigma_*\,j(P)$ and $j(\sigma Q)=\sigma_*\,j(Q)$. 
By Proposition~\ref{prop:isometry}, $\sigma_*\in O(V)$, so
\[
\langle j(\sigma P),\,j(\sigma Q)\rangle
= \langle \sigma_* j(P),\,\sigma_* j(Q)\rangle
= \langle j(P),\,j(Q)\rangle.
\]
Setting $Q=P$ and using $\widehat h(v)=\langle v,v\rangle$ gives $\widehat h(j(\sigma P))=\widehat h(j(P))$.
\end{proof}

\section{Proof of the main results}\label{sec:spectral-gap}
\noindent

In this section, we prove Theorem~\ref{thm:avg-spectral-gap} and derive its immediate consequences—namely the Dirac specialization (Corollary~\ref{cor:dirac}), the kernel–injectivity criterion (Theorem~\ref{thm:kernel}), and the rank–2 Bravais test (Corollary~\ref{cor:bravais-rank2}). We begin by proving a few preliminary lemmas required for the proof: Lemma~\ref{lem:sym-identity}, the averaged gap inequality (Lemma~\ref{lem:avg-gap}), and the Rayleigh/Courant–Fischer bound (Lemma~\ref{lem:rayleigh}).

We recall Definition~\ref{def:symmetrized} from the Introduction.

\begin{lemma}\label{lem:sym-identity}
For every $\sigma\in H$ and $v\in V$ one has $\langle v,S_\sigma v\rangle=\langle v,\sigma_*v\rangle$.
\end{lemma}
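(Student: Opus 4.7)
The plan is to unfold the definition of $S_\sigma$ and then collapse the two resulting terms via the defining property of the adjoint together with the symmetry of the Néron--Tate pairing. Concretely, I would first write
\[
\langle v, S_\sigma v\rangle \;=\; \tfrac12\langle v,\sigma_* v\rangle + \tfrac12\langle v,(\sigma_*)^\dagger v\rangle,
\]
using linearity of $\langle\,,\rangle$ in the second slot and the definition $S_\sigma = \tfrac12(\sigma_* + (\sigma_*)^\dagger)$ from Definition~\ref{def:symmetrized}.

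Next I would rewrite the adjoint term. By the defining property of $(\sigma_*)^\dagger$ we have $\langle v,(\sigma_*)^\dagger v\rangle = \langle \sigma_* v, v\rangle$, and since the Néron--Tate pairing is symmetric this equals $\langle v,\sigma_* v\rangle$. Substituting back gives $\langle v,S_\sigma v\rangle = \tfrac12\langle v,\sigma_* v\rangle + \tfrac12\langle v,\sigma_* v\rangle = \langle v,\sigma_* v\rangle$.

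There is essentially no obstacle here: the statement is a direct consequence of the definition of the symmetric part of an operator on a real inner product space, and does not use any arithmetic input from the curve or the Mordell--Weil lattice. The only thing to be slightly careful about is confirming that the adjoint $(\sigma_*)^\dagger$ is taken with respect to the same Néron--Tate pairing used throughout, which is indeed how it is introduced in Definition~\ref{def:symmetrized}. The role of the lemma in the paper will be to allow the quadratic form $v\mapsto \langle v,\sigma_* v\rangle$ (which is what appears after averaging the gap principle over $H$) to be replaced by the self-adjoint quadratic form $v\mapsto \langle v,S_\sigma v\rangle$, so that Courant--Fischer is applicable in Lemma~\ref{lem:rayleigh}.
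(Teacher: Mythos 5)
Your proof is correct and follows essentially the same route as the paper's: unfold the definition of $S_\sigma$, apply the defining property of the adjoint to the term $\langle v,(\sigma_*)^\dagger v\rangle$, and collapse via symmetry of the Néron--Tate pairing. No gaps; the commentary about the lemma's role in enabling Courant--Fischer is also accurate.
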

\begin{proof}
By definition,
\[
\langle v,S_\sigma v\rangle
=\tfrac12\bigl(\langle v,\sigma_*v\rangle+\langle v,(\sigma_*)^\dagger v\rangle\bigr)
=\tfrac12\bigl(\langle v,\sigma_*v\rangle+\langle \sigma_*v,\,v\rangle\bigr)
=\langle v,\sigma_*v\rangle,
\]
using the adjoint property and symmetry of $\langle\, ,\,\rangle$.
\end{proof}

\begin{remark}\label{rmk:cosine}
Geometrically, this means on any $\sigma_*$--invariant $2$--plane where $\sigma_*$ is a rotation by angle $\theta$ for the Néron-Tate metric, $S_\sigma$ acts as multiplication by $\cos\theta$. Thus $\langle v,S_\sigma v\rangle/\|v\|^2$ is a cosine, and $\lambda_{\min}(S_\sigma)$ is the smallest cosine over all directions.
\end{remark}

\begin{lemma}[Averaged gap inequality]\label{lem:avg-gap}
Let $H\le \Aut_K(X)$ be finite and $\mu$ a probability measure on $H$ with $\mu(\mathrm{id})=0$. If $P\in X(K)$ has $\mathrm{Stab}_H(P)=\mathrm{id}$ and $v=j(P)$ with $h=\|v\|^2$, then
\begin{equation}\label{eq:avg-gap}
  2g\,\Big\langle v,\sum_{\sigma\in H}\mu(\sigma)\,\sigma_*v\Big\rangle\ -\ 2h\ \le\ M(X).
\end{equation}
\end{lemma}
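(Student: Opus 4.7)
The plan is to apply the explicit gap principle (Theorem~\ref{thm:gap-principle}), in its rearranged form \eqref{eq:gap-rearranged}, to each pair $(P,\sigma P)$ with $\sigma\in H\setminus\{\mathrm{id}\}$, and then take the $\mu$-expectation of the resulting family of inequalities. The trivial-stabilizer hypothesis is precisely what makes all of these pairs admissible inputs to the gap principle, and $\mu(\mathrm{id})=0$ is what lets us avoid the diagonal pair $(P,P)$ in the averaging step.

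Concretely, I would fix an arbitrary $\sigma\in H\setminus\{\mathrm{id}\}$ and set $Q:=\sigma(P)$. Since $\mathrm{Stab}_H(P)=\{\mathrm{id}\}$, we have $Q\neq P$, so Theorem~\ref{thm:gap-principle} applies. By equivariance (Lemma~\ref{lem:eqv}) we can identify $j(Q)=\sigma_*v$, and by the isometry corollary to Proposition~\ref{prop:isometry} we have $\hhat(j(Q))=\hhat(v)=h$. Plugging these into \eqref{eq:gap-rearranged} gives the pointwise inequality
\[
  2g\,\langle v,\,\sigma_* v\rangle \;-\; 2h \;\le\; M(X) \qquad \text{for every } \sigma\in H\setminus\{\mathrm{id}\}.
\]

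I would then multiply each of these inequalities by the weight $\mu(\sigma)\ge 0$ and sum over $\sigma\in H$. Because $\mu(\mathrm{id})=0$, the (disallowed) diagonal term contributes nothing; because $\mu$ is a probability measure, $\sum_\sigma\mu(\sigma)=1$, so the right-hand side averages to $M(X)$ and the $-2h$ terms average to $-2h$. Finally, $\RR$-linearity of the Néron--Tate pairing in the second slot lets me pull the sum inside:
\[
  \sum_{\sigma\in H}\mu(\sigma)\,\langle v,\sigma_* v\rangle
  \;=\; \Big\langle v,\;\sum_{\sigma\in H}\mu(\sigma)\,\sigma_* v\Big\rangle,
\]
which yields exactly \eqref{eq:avg-gap}.

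I do not expect any genuine obstacle here: once the equivariance and isometry statements from the background section are in hand, the argument is a one-line substitution followed by a convex combination. The only points requiring care are the bookkeeping observations already noted — that the trivial-stabilizer assumption makes every $\sigma\neq\mathrm{id}$ produce a legitimate distinct pair, and that $\mu(\mathrm{id})=0$ prevents the absent $\sigma=\mathrm{id}$ inequality from being needed in the average.
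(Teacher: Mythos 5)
Your proof is correct and follows essentially the same route as the paper: apply the rearranged gap principle \eqref{eq:gap-rearranged} to each pair $(P,\sigma P)$ with $\sigma$ in the support of $\mu$, then average. You merely make explicit two steps the paper leaves implicit — that $j(\sigma P)=\sigma_* v$ by Lemma~\ref{lem:eqv} and that $\hhat(j(\sigma P))=h$ by the isometry corollary — which is fine and arguably clearer.
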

\begin{proof}
For each $\sigma$ in the support of $\mu$, $\sigma(P)\neq P$, so by \eqref{eq:gap-rearranged}
\(
  2g\,\langle v,\sigma_*v\rangle-2h\le M(X).
\)
Multiply by $\mu(\sigma)$ and sum over $\sigma$; since $\sum\mu(\sigma)=1$, \eqref{eq:avg-gap} follows.
\end{proof}

\begin{lemma}\label{lem:rayleigh}
For any self--adjoint $T$ on $V$ and any $v\in V$,
\[
  \langle v,Tv\rangle\ \ge\ \lambda_{\min}(T)\,\|v\|^2.
\]
\end{lemma}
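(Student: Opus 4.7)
The plan is to deduce the inequality from the real spectral theorem applied to $T$. This is available because $V=\Jac(K)\otimes_\ZZ\RR$ is a finite-dimensional real vector space (by Mordell--Weil, $\Jac(K)$ is finitely generated) and the Néron--Tate pairing descends to a positive-definite inner product on $V$ (its kernel on $\Jac(K)$ is the torsion, which is annihilated by $-\otimes_\ZZ\RR$). Once $T$ is diagonalized in an orthonormal eigenbasis, the claim reduces to a termwise comparison of eigenvalues against their minimum.

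Concretely, the steps I would carry out are the following. First, invoke the real spectral theorem to produce an orthonormal basis $e_1,\ldots,e_n$ of $V$ with $T e_i=\lambda_i e_i$, where $\lambda_{\min}(T)=\lambda_1\le\cdots\le\lambda_n$ are the eigenvalues of $T$ counted with multiplicity. Next, for an arbitrary $v\in V$, write $v=\sum_i c_i e_i$ with $c_i=\langle v,e_i\rangle$, so that
\[
\|v\|^2=\sum_{i=1}^n c_i^2,\qquad \langle v,Tv\rangle=\sum_{i=1}^n \lambda_i c_i^2
\]
by orthonormality and bilinearity. Finally, since $\lambda_i\ge\lambda_{\min}(T)$ and $c_i^2\ge 0$ for every $i$, termwise comparison yields $\sum_i \lambda_i c_i^2\ge \lambda_{\min}(T)\sum_i c_i^2$, which is precisely the asserted bound.

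There is no substantive obstacle: this is simply the Rayleigh--Ritz inequality (the easy half of Courant--Fischer), and every step is textbook linear algebra on a finite-dimensional real inner product space. The only point worth being explicit about in the write-up is the justification that $V$ is finite-dimensional with a genuine inner product, so that self-adjointness of $T$ with respect to $\langle\,,\,\rangle$ is a well-posed hypothesis and the spectral theorem actually applies.
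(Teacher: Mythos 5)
Your proof is correct and is precisely the standard argument the paper invokes when it cites the Courant--Fischer (Rayleigh--Ritz) inequality without elaboration; expanding the citation via the spectral theorem and a termwise eigenvalue comparison is the textbook route, not a different one. Your added remark that $V$ is finite-dimensional with a genuine (positive-definite) inner product, so the spectral theorem applies, is a worthwhile clarification the paper leaves implicit.
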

\begin{proof}
This is the Courant--Fischer variational inequality.
\end{proof}

\begin{remark} Since $\sigma_*\in O(V)$, its spectrum lies on the unit circle and $-1\le \lambda_{\min}(S_\sigma)\le 1$. If $\sigma$ is an involution, then $\sigma_*=(\sigma_*)^{\dagger}$, so $S_\sigma=\sigma_*$. \end{remark}

\vspace{1em}

We are now ready to prove \textbf{Theorem~\ref{thm:avg-spectral-gap}}.
\begin{proof}
Let $v=j(P)$ and $h=\|v\|^2$. By Lemma~\ref{lem:avg-gap},
\[
  2g\,\Big\langle v,\sum_{\sigma}\mu(\sigma)\,\sigma_*v\Big\rangle-2h\ \le\ M(X).
\]
By Lemma~\ref{lem:sym-identity} the LHS equals $2g\,\langle v,S_\mu v\rangle-2h$. Using Lemma~\ref{lem:rayleigh} with $T=S_\mu$ yields
\(
  2g\,\beta_\mu\,h-2h\le M(X),
\)
which rearranges to the stated bound because $\beta_\mu>1/g$.
\end{proof}

\medskip
\noindent\textbf{Why averaging helps.}
The construction of the averaged operator $S_\mu$ is motivated by the fact that averaging cannot worsen the Rayleigh lower bound and can in fact improve it.  
Different $S_\sigma$ may have different “bad directions,” so $\lambda_{\min}(S_\mu)$ can exceed 
$\max_{\sigma}\lambda_{\min}(S_\sigma)$. For example, in $\RR^2$,
\[
T_1=\begin{pmatrix}0&0\\[2pt]0&1\end{pmatrix},\qquad
T_2=\begin{pmatrix}1&0\\[2pt]0&0\end{pmatrix}
\quad\Rightarrow\quad
\lambda_{\min}(T_1)=\lambda_{\min}(T_2)=0
\ \text{ but }\ 
\lambda_{\min}\!\Big(\tfrac12(T_1+T_2)\Big)=\tfrac12.
\]
The next lemma formalizes this.

\begin{lemma}[Minimum eigenvalue under convex combination]\label{lem:convex-min}
If $T_1,\dots,T_m$ are self–adjoint and $\mu_i\ge 0$ with $\sum_i \mu_i=1$, then
\[
  \lambda_{\min}\!\Big(\sum_{i=1}^m \mu_i\,T_i\Big)\ \ge\ \sum_{i=1}^m \mu_i\,\lambda_{\min}(T_i).
\]
\end{lemma}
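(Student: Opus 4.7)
The plan is to reduce the claim to the Rayleigh–quotient characterization already used in Lemma~\ref{lem:rayleigh}. First I would observe that the operator $T:=\sum_{i=1}^m\mu_i T_i$ is again self–adjoint on $V$, since the real vector space of self–adjoint operators is closed under $\RR$–linear combinations; thus the variational characterization applies: $\lambda_{\min}(T)=\inf_{v\neq 0}\langle v,Tv\rangle/\|v\|^2$.

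Next I would exploit linearity of the pairing in its second argument: for every nonzero $v\in V$,
\[
  \langle v,Tv\rangle \;=\; \Big\langle v,\sum_{i=1}^m \mu_i\,T_i v\Big\rangle \;=\; \sum_{i=1}^m \mu_i\,\langle v,T_i v\rangle .
\]
Applying Lemma~\ref{lem:rayleigh} to each $T_i$ yields $\langle v,T_i v\rangle \ge \lambda_{\min}(T_i)\,\|v\|^2$. Because $\mu_i\ge 0$, I can multiply by $\mu_i$ and sum without flipping the inequality, obtaining $\langle v,Tv\rangle \ge \bigl(\sum_i \mu_i\,\lambda_{\min}(T_i)\bigr)\|v\|^2$. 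Dividing by $\|v\|^2$ and taking the infimum over $v\neq 0$ on the left then gives the claimed bound, since the right-hand side is independent of $v$.

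I do not anticipate a real obstacle: the argument is pure linearity together with the monotonicity of summation under nonnegative weights. The only subtlety worth flagging—already illustrated by the $2\times 2$ example preceding the lemma—is that the inequality is generally strict, and quantifying the slack would require additional information about the eigenvectors realizing $\lambda_{\min}(T_i)$. Since the statement only asks for the one-sided bound, no such refinement is needed here.
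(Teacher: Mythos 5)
Your proof is correct and follows essentially the same route as the paper's: apply the Rayleigh bound termwise, use linearity of the pairing and nonnegativity of the weights to aggregate, then minimize (the paper uses unit vectors, you normalize by $\|v\|^2$, which is the same thing).
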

\begin{proof}
For unit $u$, $\langle u,T_i u\rangle\ge \lambda_{\min}(T_i)$. Hence
\(
\big\langle u,\sum_i\mu_i T_i\,u\big\rangle=\sum_i\mu_i\langle u,T_i u\rangle
\ge\sum_i\mu_i\lambda_{\min}(T_i).
\)
Minimize the left side over unit $u$.
\end{proof}

By applying Theorem~\ref{thm:avg-spectral-gap} with $\mu=\delta_\sigma$ at a maximizer $\sigma$, we get Corollary~\ref{cor:dirac}.

\vspace{2em}
If some nontrivial automorphism acts trivially on $V$, the averaged operator in Theorem~\ref{thm:avg-spectral-gap} becomes the identity and the bound improves to $M(X)/(2g-2)$. Next, we prove Theorem~\ref{thm:kernel} and then explain a practical rank–2 test (Cor.~\ref{cor:bravais-rank2}) using the Bravais classification of the Mordell–Weil lattice to detect non-injectivity of $\varphi$.

\begin{proof}[Proof of Theorem~\ref{thm:kernel}]
Pick $\sigma\in\ker\varphi\setminus\{\mathrm{id}\}$ and set $H:=\langle\sigma\rangle$. 
Let $\mu:=\delta_\sigma$ be the Dirac probability on $\sigma$ (so $\mu(\mathrm{id})=0$).
Since $\sigma\in\ker\varphi$, the induced action on $V$ is $\sigma_*=\mathrm{id}$, hence
\[
S_\mu \;=\; S_\sigma \;=\; \tfrac12\bigl(\sigma_*+(\sigma_*)^\dagger\bigr)\;=\;\mathrm{id},
\qquad
\beta_\mu \;=\; \lambda_{\min}(S_\mu)\;=\;1 \;>\; \frac1g.
\]
Because $\mathrm{Stab}_{\ker\varphi}(P)=\{\mathrm{id}\}$, also $\mathrm{Stab}_{H}(P)=\{\mathrm{id}\}$.
Therefore Theorem~\ref{thm:avg-spectral-gap} applies to $(H,\mu)$ and yields
\[
\hhat\!\bigl(j(P)\bigr)\ \le\ \frac{M(X)}{2\bigl(g\,\beta_\mu-1\bigr)}
\;=\; \frac{M(X)}{2(g-1)}
\;=\; \frac{M(X)}{2g-2}.
\]
\end{proof}

\subsection{Detecting non-injectivity in practice.}

\vspace{0.5em}
Let $\Lambda:=\Jac(K)/T$ be the free part of $\Jac(K)$ and fix free generators $G_1,\dots,G_r$. 
If for some $\sigma\in\Aut_K(X)$ one has $\hhat(\sigma_*(G_i)-G_i)=0$ for all $i$, then each $\sigma_*(G_i)-G_i$ is torsion (the canonical height vanishes only on torsion). 
Hence $\sigma_*$ acts trivially on $\Lambda$, so $\sigma\in\ker\varphi$.

\medskip
\noindent
Complementary to the above check is the following: the natural homomorphism
\[
  \varphi:\Aut_K(X)\longrightarrow O(\Lambda)\subseteq O(V),\qquad \sigma\longmapsto \sigma_*,
\]
has image contained in the point group $O(\Lambda)$ of the Mordell–Weil lattice. 
Therefore
\begin{equation}\label{eq:kernel-lb}
  |\mathrm{im}\,\varphi|\ \le\ |O(\Lambda)| 
  \qquad\Rightarrow\qquad
  |\ker\varphi|\ =\ \frac{|\Aut_K(X)|}{|\mathrm{im}\,\varphi|}\ \ge\ \frac{|\Aut_K(X)|}{|O(\Lambda)|}.
\end{equation}
If $|\Aut_K(X)|>|O(\Lambda)|$, then $\ker\varphi\neq\mathrm{id}$ and Theorem~\ref{thm:kernel} applies.

% \subsection{Rank–2 Bravais classification and the size test}\label{sec:bravais2}
\vspace{2em}

When $\rank\,\Jac(K)=2$, $|O(\Lambda)|$ is completely explicit: it depends only on the Bravais type of $\Lambda$, which can be read off from the $2\times2$ height Gram matrix in an LLL-reduced basis.

\medskip
In two dimensions there are four crystal systems (oblique, rectangular, square, hexagonal) and \emph{five} Bravais lattices, the extra type being the \emph{centered rectangular} lattice; see the International Tables for Crystallography \cite[Vol.~A, §3.1]{ITA} and the MIT 6.730 notes \cite[Lecture~6, slide~2]{MIT6730L6}. 
For our purposes the two rectangular Bravais types (primitive and centered) share the same point group $D_2$ and hence the same orthogonal symmetry size $|O(\Lambda)|=4$, so we group them together under “rectangular’’ in the list below.

\medskip
Assume $\rank\,\Jac(K)=2$. Let $\Lambda=\Jac(K)/T\cong\ZZ^2$ and choose an LLL-reduced (i.e. minimal-length in rank~2) basis $\{G_1,G_2\}$ of $\Lambda$. 
Write the Gram matrix of the pairing as
\[
  H\ =\ \begin{pmatrix}H_{11}&H_{12}\\ H_{12}&H_{22}\end{pmatrix}
  \quad\text{with}\quad H_{ij}=\<G_i,G_j\>.
\]
The lattice type is determined by $(H_{11},H_{22},H_{12})$:
\begin{itemize}[leftmargin=*]
  \item \textbf{Oblique:} $H_{12}\ne0$, $H_{11}\ne H_{22}$, and $\dfrac{H_{12}}{\sqrt{H_{11}H_{22}}}\ne\pm\tfrac12$. Then $O(\Lambda)\cong C_2$ and $|O(\Lambda)|=2$.
  \item \textbf{Rectangular:} $H_{12}=0$ and $H_{11}\ne H_{22}$. Then $O(\Lambda)\cong D_2$ and $|O(\Lambda)|=4$.
  \item \textbf{Square:} $H_{12}=0$ and $H_{11}=H_{22}$. Then $O(\Lambda)\cong D_4$ and $|O(\Lambda)|=8$.
  \item \textbf{Hexagonal:} $H_{11}=H_{22}$ and $H_{12}=\tfrac12 H_{11}$. Then $O(\Lambda)\cong D_6$ and $|O(\Lambda)|=12$.
\end{itemize}

\noindent
Combining the table above with \eqref{eq:kernel-lb} yields the stated rank–2 test: if $|\Aut_K(X)|$ exceeds the corresponding $|O(\Lambda)|$ for the Bravais type read from $H$, then $\ker\varphi\neq\mathrm{id}$ and Theorem~\ref{thm:kernel} gives the stronger height bound.

\section{Example}\label{sec:experiments}

Consider the genus-$2$ curve over $\QQ$ with LMFDB label \texttt{196098.a.196098.1}
\[
  X:\ y^2 = x^6 - 12x^4 + 6x^3 - 284x^2 + 1488x - 1815
\]
\[
\Aut_{\QQ}(X)\cong C_2^2
\]

\textbf{The height bound of} \textbf{\cite{garciafritz2025effectivemordellcurvesautomorphisms} }coming from the “enough automorphisms’’ hypothesis (see the table on p.~2 of their paper) \textbf{does not apply here} because $|\Aut_\QQ(X)|=4$ is strictly smaller than the required threshold in the $(g,r)=(2,2)$ regime.

\vspace{2em}
The Mordell--Weil group of the Jacobian
\[
  \Jac(\QQ)\ \cong\ \ZZ\oplus\ZZ\oplus (\ZZ/4\ZZ).
\]
Using \textsc{Magma} \cite{MR1484478}, free generators $G_2,G_3$ for the rank-$2$ part can be chosen and the pushforwards along the automorphism
\[
  [X:Y:Z]\ \longmapsto\ \Bigl[X-\tfrac{21}{8}Z:\ \tfrac{1}{512}Y:\ \tfrac{3}{8}X - Z\Bigr].
\]
computed. The data are:
\begin{align*}
G_2\ &(\text{Generator 2}) &&=\ (1,\ x^3,\ 2),\\
F_2\ &(\text{Pushforward of }G_2) &&=\ \Bigl(x^2-\tfrac{16}{3}x+\tfrac{64}{9},\ \tfrac{8}{3}x-\tfrac{191}{27},\ 2\Bigr),\\
\hhat(G_2)\ &=\ 2.117,\\
\hhat(F_2)\ &=\ 2.117,\\
\hhat(G_2 - F_2)\ &=\ 0,\\[0.5em]
G_3\ &(\text{Generator 3}) &&=\ \Bigl(x^2+6x-23,\ \tfrac{99}{2}x-\tfrac{263}{2},\ 2\Bigr),\\
F_3\ &(\text{Pushforward of }G_3) &&=\ \Bigl(x^2+6x-23,\ \tfrac{99}{2}x-\tfrac{263}{2},\ 2\Bigr),\\
\hhat(G_3)\ &=\ 3.324,\\
\hhat(F_3)\ &=\ 3.324,\\
\hhat(G_3 - F_3)\ &=\ 0.
\end{align*}
Hence this automorphism acts trivially on $V$, so $\ker\varphi\ne\mathrm{id}$.

Another way to check if $\ker\varphi\ne\mathrm{id}$ is via the height Gram matrix in the reduced basis $\{G_2,G_3\}$, 
\[
  H\ =\ \begin{pmatrix}
     2.116 & -0.913\\
    -0.913 & 3.324
  \end{pmatrix},
\]
so $H_{12}\ne0$, $H_{11}\ne H_{22}$, and $\dfrac{H_{12}}{\sqrt{H_{11}H_{22}}}\approx -0.344\ne\pm\tfrac12$. Thus $\Lambda$ is oblique and $|O(\Lambda)|=2$, while $|\Aut(X)|=4$, consistent with~\eqref{eq:kernel-lb}.

Since $\ker\varphi\ne\mathrm{id}$, Theorem~\ref{thm:kernel} applies to this genus-2 curve. This gives us the height bound
$\widehat h(j(P))\le M(X)/2$, thus enabling us to find all the rational points on $X$.

\section*{Acknowledgements}
\noindent This project grew out as a part of the work done during the UC Berkeley Mathematics REU 2025. 
The author is grateful to Yinglu Zhou and Eric Ju for valuable conversations and Fangu Chen for guidance and mentorship. 
This work was supported by the National Science Foundation (DMS–2342225) and the Mathematical and Physical Sciences Foundation.

\thispagestyle{empty}
\bibliographystyle{amsalpha}
\bibliography{main}

\end{document}